\documentclass[12pt,reqno]{amsart}

\usepackage[T1]{fontenc}
\usepackage[utf8]{inputenc}
\usepackage{lmodern}
\usepackage{amsmath,amssymb,amsthm,mathtools}
\usepackage[margin=1.5in]{geometry}
\usepackage{microtype}
\usepackage[hidelinks]{hyperref}

\newtheorem{thm}{Theorem}[section]
\newtheorem{prop}[thm]{Proposition}
\newtheorem{lem}[thm]{Lemma}
\newtheorem{cor}[thm]{Corollary}
\newtheorem{exmpl}[thm]{Example}
\newtheorem{ques}[thm]{Question}

\theoremstyle{definition}
\newtheorem{defn}[thm]{Definition}

\theoremstyle{remark}
\newtheorem{rem}[thm]{Remark}

\newcommand{\Mat}{\operatorname{Mat}}

\newcommand{\rank}{\operatorname{rank}}
\newcommand{\id}{\operatorname{id}}

\title{Amenability is not sensitive to the base field}

\author{Be'eri Greenfeld$^\ast$} \address{Department of Mathematics and Statistics, Hunter College and CUNY Graduate Center, 695 Park Avenue, New York NY 10065, USA} \email{beeri.greenfeld@hunter.cuny.edu}
\thanks{$^\ast$While it is currently prohibited by arXiv policy to list AI as a coauthor, the (human) coauthor is confident that ChatGPT's contribution merits an author credit.}

\date{}

\begin{document}

\begin{abstract}
We prove that the notion of amenability of a (module over) an associative algebra does not depend on the ground field, answering a question proposed by Cornulier in a MathOverflow discussion. A significant part of the argument is based on ideas of ChatGPT 5.6 Sol.
\end{abstract}

\maketitle

\section{Introduction}

Let $K$ be a field and let $A$ be an associative, not necessarily commutative, $K$-algebra. Let $M$ be a (left) $A$-module.

\begin{defn}
Let $a_1,\dots,a_r\in A$ be given. We say that a non-zero finite-dimensional subspace $V\leq M$ is \textbf{$\varepsilon$-almost invariant with respect to $a_1,\dots,a_r$} if
$$\dim_K (V+a_1V+\dots+a_r V) \leq (1+\varepsilon) \dim_K V.$$
We say that $M$ is an \textbf{amenable $A$-module} if for every $a_1,\dots,a_r$ and for every $\varepsilon>0$, $M$ admits an $\varepsilon$-almost invariant subspace with respect to $a_1,\dots,a_r$.
\end{defn}
This notion can be viewed as a linearization of the notion of amenable $G$-sets.

\begin{defn}
An algebra is \textbf{amenable}
 if it is amenable as a module over itself. An algebra is \textbf{fully amenable} if all of its non-zero modules are amenable.
\end{defn}

Bartholdi \cite{Bartholdi_group_algebra} proved that a group is amenable if and only if its group algebra (over an arbitrary field) is amenable. For more on the amenability of algebras, see 
\cite{Ara_BMS,ArzhPau,Atkarskaya,Bartholdi_book,Bartholdi2,BG,CSSV_JMS,CSSV_Expo,DAdderio,Elek_sf,Elek,Gro,Gromov} and references therein.

Notice that the notion of amenability of a module (and thus, of an algebra) is defined relative to the specified base field and can a priori depend on it; it is thus natural to ask:
\begin{ques}
Does the notion of amenability of an algebra depend on the ground field?
\end{ques}
We learned about this question from Y.~Cornulier's comment on MathOverflow, where this question is proposed \cite{YCor} (posed there in the context of amenable algebras). In this note, we settle it.

\begin{thm} \label{thm}
Let $L$ be a field and let $A$ be an $L$-algebra. Let $M$ be an $A$-module. Let $K\subseteq L$ be a subfield. Then $M$ is amenable over $A$ where $A$ is viewed as a $K$-algebra if and only if $M$ is amenable over $A$ where $A$ is viewed as an $L$-algebra.
\end{thm}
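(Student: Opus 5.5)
The statement has two directions, and I would treat them by different arguments. The easy direction is that $L$-amenability implies $K$-amenability. The harder direction is the converse; the natural argument for it runs into a gap that I do not yet know how to close.

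\emph{From $L$-amenability to $K$-amenability.} Fix $a_1,\dots,a_r\in A$ and $\varepsilon>0$. Let $W\le M$ be a finite-dimensional $L$-subspace that is $\varepsilon$-almost invariant over $L$. When $[L:K]=\infty$, $W$ is infinite-dimensional over $K$, so I will thicken an $L$-basis by a Følner set of scalars.
\begin{enumerate}
\item Choose an $L$-basis $w_1,\dots,w_n$ of $W$ and extend it to an $L$-basis $w_1,\dots,w_m$ of $W+\sum a_iW$.
\item Write $a_iw_j=\sum_k c_{ijk}w_k$ with $c_{ijk}\in L$, and let $R\subseteq L$ be the $K$-subalgebra generated by the finitely many $c_{ijk}$.
\item Since $R$ is a finitely generated commutative $K$-algebra, it has polynomial growth. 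Hence some filtration piece $F=R_{\le N}$ satisfies $\dim_K(F+\sum c_{ijk}F)\le(1+\delta)\dim_K F$, and I set $F'=F+\sum_{i,j,k}c_{ijk}F$.
\item Put $V=\sum_j Fw_j$. Since the $a_i$ are $L$-linear, $a_iV\subseteq\sum_k F'w_k$.
\item $L$-independence of the $w_k$ gives $\dim_K V=n\dim_K F$ and $\dim_K(V+\sum a_iV)\le m\dim_K F'$. Therefore
\[
\dim_K\Bigl(V+\sum a_iV\Bigr)\le(1+\delta)(1+\varepsilon)\dim_K V,
\]
which is what is needed.
\end{enumerate}

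\emph{From $K$-amenability to $L$-amenability: the natural attempt.} Start from a $K$-subspace $V$ that is $\varepsilon$-almost invariant over $K$, and take $W=LV$. Then $W+\sum a_iW=L(V+\sum a_iV)$. For a finite-dimensional $K$-subspace $U$, define the defect $D(U)=\dim_K U-\dim_L LU\ge 0$.

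I first check that $D$ is monotone: if $U\subseteq U'$, then $LU'$ is spanned over $L$ by $LU$ together with lifts of a $K$-basis of $U'/U$. It follows that
\[
\dim_L L\Bigl(V+\sum a_iV\Bigr)-\dim_L LV\le \dim_K\Bigl(\bigl(V+\sum a_iV\bigr)/V\Bigr)\le\varepsilon\dim_K V .
\]

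This bound would finish the argument if $D(V)$ were at most a fixed fraction of $\dim_K V$, since $\dim_K V$ would then be comparable to $\dim_L LV$. The obstacle is that $D(V)$ may be almost all of $\dim_K V$. For example, $V$ might be a large piece of a single $L$-line, in which case $\varepsilon\dim_K V$ is not small compared with $\dim_L LV$. Controlling this defect is the step I expect to be hardest.

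\emph{Plan for controlling the defect.} I would try the following three ideas in order.
\begin{enumerate}
\item \emph{Rescaling $\varepsilon$.} Apply $K$-amenability with a much smaller $\varepsilon$ and exploit the submodularity of $U\mapsto\dim_L LU$. Choose $V$ among the almost-invariant subspaces so that the ratio $\dim_L LV/\dim_K V$ is as large as possible, and show that this ratio stays bounded below along a Følner sequence.
\item \emph{Redoing Step 1 in reverse.} If the first idea fails, slice $V$ by a filtration. Decompose $M=\bigoplus L m_\alpha$ and project $V$ onto the coefficients in $L$. This turns a $K$-Følner subspace into a family of subspaces of the form $F\cdot(\text{$L$-independent vectors})$. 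The aim is then an averaging argument of Ornstein--Weiss type: find a "slice" $\sum_j F w_j$ inside $V$ whose $L$-span has relative boundary at most $O(\varepsilon)$.
\item \emph{Reducing the field extension.} Separately, I would reduce to the case where $L$ is finitely generated over $K$. Only finitely many scalars enter any fixed almost-invariance problem, and the finite-extension case is immediate because there $\dim_K=[L:K]\dim_L$. This would let me try the above arguments one simple extension at a time.
\end{enumerate}
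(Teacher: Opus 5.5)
Your argument from $L$-amenability to $K$-amenability is correct. It is essentially the paper's going-down proof: thicken an $L$-basis of the almost invariant $L$-subspace by a F\o lner piece of the finitely generated commutative $K$-algebra spanned by the structure constants.

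There is a genuine gap in the converse direction, which is the real content of the theorem: it is not proved, and your first idea cannot work as stated. Take $A=M=L=K(t)$ and $a_1=t$. No nonzero finite-dimensional $K$-subspace of $K(t)$ is $t$-invariant, so an $\varepsilon$-almost invariant $V$ has $\dim_K V\ge 1/\varepsilon$, while $\dim_L LV=1$. Hence $\dim_L LV/\dim_K V\to 0$ along every F\o lner sequence, even though $LV=L$ is invariant. A large defect is therefore unavoidable and is not itself the obstruction. The same example shows the difficulty already lives in $K(t)/K$, so reducing to finitely generated extensions (your third idea) does not remove it.

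The choice of $V$ does matter, though. Let $V=Fw\oplus Ku$ with $F\le L$ a large $K$-subspace, $aw=w$, $au=u'$, and $w,u,u'$ linearly independent over $L$. Then $V$ is almost invariant over $K$, but $\dim_L(LV+aLV)=3$ while $\dim_L LV=2$.

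The missing ingredient is a \emph{proportional} defect inequality. Put $W=V+\sum a_iV$, $n=\dim_K V$, $m=\dim_K W$. One needs $D(W)\ge\frac{m}{n}D(V)$, because then $\dim_L LW/\dim_L LV=(m-D(W))/(n-D(V))\le m/n$. Your monotonicity only gives $D(W)\ge D(V)$. The paper proves the proportional inequality in three steps.
(i) Replace $V$ by a nonzero subspace $U$ minimizing $\dim_K(\sum_{i=0}^r a_iU)/\dim_K U$, with $a_0=1$. The ratio stays $\le 1+\varepsilon$, and $V$ becomes optimal: every nonzero subspace expands by at least $m/n$.
(ii) Optimality survives any extension of scalars. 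Every nonzero $L$-subspace $T\le L\otimes_K V$ satisfies $\dim_L\sum_i(\id\otimes\phi_i)(T)\ge\frac{m}{n}\dim_L T$, where $\phi_i(v)=a_iv$. For finite $L/K$ this is an induction on $[L:K]$, using $L\otimes_K V\cong V^d$ and projection onto one coordinate. For arbitrary $L/K$, a violating $T$ is defined over a finitely generated $K$-subalgebra of $L$ with suitable minors inverted, and reducing modulo a maximal ideal gives a violation over a finite extension.
(iii) Take the kernels $P=\ker(L\otimes_K V\to M)$ and $Q=\ker(L\otimes_K W\to M)$. They have $L$-dimensions $D(V)$ and $D(W)$, and $\sum_i(\id\otimes\phi_i)(P)\subseteq Q$, so (ii) gives exactly $D(W)\ge\frac{m}{n}D(V)$.

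In the example above, step (i) replaces $V$ by a subspace of $Fw$, whose $L$-span $Lw$ is invariant.
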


An immediate consequence is:

\begin{cor}
Let $L$ be a field and let $A$ be an $L$-algebra. Let $K\subseteq L$ be a subfield. Then $A$ is amenable (respectively, fully amenable) as a $K$-algebra if and only if it is amenable (respectively, fully amenable) as an $L$-algebra.
\end{cor}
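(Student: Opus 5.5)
The plan is to treat the two implications separately. The easy direction, $L$-amenable $\Rightarrow$ $K$-amenable, goes by tensoring an $L$-Følner subspace with a $K$-Følner subspace of a finitely generated subfield of $L$. The harder direction, $K$-amenable $\Rightarrow$ $L$-amenable, goes by feeding scalars into the $K$-Følner condition. I have a complete argument only when $[L:K]<\infty$; the infinite case is the main obstacle and is not settled below. Throughout, scalars in $L$ commute with every $a\in A$, since $A$ is an $L$-algebra.

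\emph{From $L$ to $K$.} Fix $a_1,\dots,a_r$ and $\varepsilon>0$, and let $W$ be an $L$-subspace with $L$-basis $w_1,\dots,w_n$ that is $\varepsilon$-almost invariant over $L$. Write $W+\sum_i a_iW=W\oplus U$ with $U=\bigoplus_{l\le m}Lu_l$ and $m\le\varepsilon n$. Each $a_iw_j$ expands as $\sum_k c_{ijk}w_k+\sum_l d_{ijl}u_l$, and I let $C\subset L$ be the finite set of these coefficients. For a finite-dimensional $K$-subspace $F\le L$ I put $V=\sum_j Fw_j$. Then $\dim_K V=n\dim_K F$, because the $w_j$ are $L$-independent. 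Moreover
$$V+\sum_i a_iV\subseteq \sum_k F'w_k+\sum_l F'u_l,\qquad F'=F+\textstyle\sum_{c\in C}cF,$$
so $\dim_K(V+\sum_i a_iV)\le (n+\varepsilon n)\dim_K F'$. It therefore suffices to choose $F$ with $\dim_K F'\le(1+\delta)\dim_K F$, i.e.\ to show that the finitely generated field $L_0=K(C)$ is an amenable module over itself as a $K$-algebra. For this I pick a transcendence basis $t_1,\dots,t_s$ of $L_0/K$ and a basis $e_1,\dots,e_d$ of $L_0$ over $K(t)$. I then choose a common denominator $D\in K[t]$ such that the structure constants of the $e_p$ and the $K(t)$-coordinates of the elements of $C$ all lie in $D^{-1}K[t]$. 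Then
$$F_N=\textstyle\sum_p D^{-N}\,K[t]_{\le N}\,e_p$$
satisfies $cF_N\subseteq F_{N+c_0}$ for a fixed constant $c_0$, and its dimension grows polynomially in $N$. Hence $\dim F_{N+c_0}/\dim F_N\to1$, which gives the required $F$.

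\emph{From $K$ to $L$, finite case.} Assume first $[L:K]=d<\infty$, with $K$-basis $\lambda_1,\dots,\lambda_d$. I apply $K$-amenability to the finite family $\{\lambda_j\}\cup\{\lambda_ja_i\}$ to obtain a $K$-subspace $V$ with $\dim_K X\le(1+\varepsilon)\dim_K V$, where $X=V+\sum_j\lambda_jV+\sum_{i,j}\lambda_ja_iV$. Now $LV=\sum_j\lambda_jV\subseteq X$. Since $a_i\lambda_k=\lambda_ka_i$ and $\lambda_j\lambda_k\in\sum_l K\lambda_l$, also $a_iLV\subseteq\sum_l\lambda_la_iV\subseteq X$. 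Thus the $L$-subspace $W=LV$ satisfies
$$\dim_K\Bigl(W+\sum_i a_iW\Bigr)\le(1+\varepsilon)\dim_K V\le(1+\varepsilon)\dim_K W,$$
and dividing by $d$ gives the $L$-dimension inequality.

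\emph{From $K$ to $L$, infinite case: the main obstacle.} When $[L:K]=\infty$, the subspace $LV$ is no longer reachable from finitely many scalars. Moreover $\dim_L LV$ can be far smaller than $\dim_K V$, so the finite-case comparison breaks down. What I would try first is a rank-counting argument. Take $V$ to be $K$-almost invariant with respect to the $a_i$ together with $fa_i$ and $f$, for $f$ ranging over a $K$-basis of a Følner subspace $F$ of a suitable finitely generated subfield. The aim is to compare $\dim_L L(V+\sum a_iV)$ with $\dim_L LV$ through the surjection $L\otimes_K(V+\sum a_iV)\to L(V+\sum a_iV)$, combined with submodularity of $\dim_L L(\cdot)$ on $K$-subspaces. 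Whether this comparison goes through is unverified, and this is where I expect the real difficulty of the theorem to lie.
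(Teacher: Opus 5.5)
Your $L\Rightarrow K$ argument is essentially the paper's going-down proof (Proposition~\ref{prop:going down}). Your finite-degree $K\Rightarrow L$ argument is also correct, and unlike the paper's argument it needs no optimality. But the proposal does not prove the statement. The implication ``$K$-amenable $\Rightarrow$ $L$-amenable'' for $[L:K]=\infty$, already for $K\subseteq K(t)$, is left open, and that is the real content of the corollary. The obstruction you identify is genuine. For an arbitrary $K$-almost invariant $V$, the span $LV$ may collapse much more than $L(V+\sum_i a_iV)$ does. Neither enlarging the test set by scalar multiples nor submodularity of $\dim_L L(\cdot)$ rules this out.

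The missing idea is to work with an \emph{optimal} $V$ and to show that optimality survives base change.
\begin{enumerate}
\item Inside a $K$-almost invariant $V_0$, choose $0\neq V\le V_0$ minimizing $\dim_K(U+\sum_i a_iU)/\dim_K U$ over all nonzero $U\le V_0$. Then $W:=V+\sum_i a_iV$ still satisfies $m/n\le 1+\varepsilon$, where $m=\dim_K W$ and $n=\dim_K V$, and no nonzero subspace of $V$ has a smaller ratio.
\item The key lemma (Lemmas~\ref{lem:finite field extensions} and~\ref{lem:arbitrary field extensions}) concerns $\phi_i\colon V\to W$, $\phi_i(v)=a_iv$. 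It says every nonzero $T\le_L L\otimes_K V$ satisfies $\dim_L\sum_i(\id_L\otimes\phi_i)(T)\ge\frac{m}{n}\dim_L T$.
\item For a finite extension of degree $d$ this is proved by induction on $d$: identify the base change with $V^d$ over $K$ and project onto the last coordinate.
\item For an arbitrary extension, a counterexample $T$ is specialized to one over a finite extension. One reduces modulo a maximal ideal of the finitely generated $K$-algebra generated by the entries of the relevant matrices and the inverses of two nonvanishing minors.
\item Then the surjection you mention is exactly the right tool. Put $P=\ker(L\otimes_K V\to M)$ and $Q=\ker(L\otimes_K W\to M)$. One has $\sum_i(\id_L\otimes\phi_i)(P)\subseteq Q$, so optimality gives $\dim_L Q\ge\frac{m}{n}\dim_L P$.
\item Hence $\dim_L(LV+\sum_i a_iLV)/\dim_L LV=(m-\dim_L Q)/(n-\dim_L P)\le m/n$.
\end{enumerate}
Optimality is exactly what forces $LW$ to collapse at least proportionally to $LV$.

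A minor slip elsewhere: $F_N=\sum_p D^{-N}K[t]_{\le N}e_p$ does not satisfy $cF_N\subseteq F_{N+c_0}$ in general. For example, with $C=\{t,t^{-1}\}$, $D=t$ and $c=t$, one has $t^0\in F_N$ but $t\notin F_{N+c_0}$. Use $K[t]_{\le\kappa N}$ with $\kappa$ large, or simply $KX^N$ with $1\in X\supseteq C$ as the paper does.
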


The proof of Theorem \ref{thm} is given in Proposition \ref{prop:going up} and Proposition \ref{prop:going down}. We conclude with examples related to exhaustive amenability.

\bigskip

\noindent \textbf{Methodology.} 
This work is heavily based on discussions with OpenAI's ChatGPT 5.6 which provided, in particular, the core ideas behind the results in Section \ref{sec:2}. To the human coauthor, this collaboration resembled a back-and-forth discussion between human collaborators, finding gaps in each other's arguments, sharing ideas regarding special cases, and concluding with a complete argument.
Interestingly, at the beginning of the discussion, ChatGPT claimed that amenability is not preserved under base change, but the discussion evolved.
The paper was written by the human author, who is responsible for it.

\medskip

\noindent \textbf{Acknowledgement.} We thank Laurent Bartholdi for pointing out that the proof can be extended to the amenability of arbitrary modules.

\section{Going up} \label{sec:2}

We start with some conventions and notation. Let $K$ be a field. By $U\leq_K V$ we mean that $U$ is a $K$-vector subspace of $V$ (this will be useful as multiple base fields will be used).

Let $0\neq E,F$ be finite-dimensional $K$-vector spaces. We will be interested in the case where $E\subseteq F$, although the linear-algebraic auxiliary results can be done in a more general setting. Let $\phi_0,\dots,\phi_r\colon E\rightarrow F$ be $K$-linear maps; assume that $\phi_0(v)=v$ for all $v\in E$. For a subspace $U\leq_K E$, denote $$\Phi_K(U) := \sum_{i=0}^{r} \phi_i(U) \leq_K F.$$

Suppose that $K\subseteq M$ is a field extension. For any $T\leq_M M\otimes_K E$, we denote $$\Phi_M(T) := \sum_{i=0}^{r} (\id_M \otimes \phi_i)(T) \leq_M M\otimes_K F.$$

\begin{defn}

Let $K$ be a field, $0\neq E,F$ finite-dimensional $K$-vector spaces, and $$\Phi\colon \{U\leq_K E\} \rightarrow \{V\leq_K F\}$$ any operator assigning a subspace of $F$ to any subspace of $E$. We say that $E$ is \textbf{$\Phi$-optimal over $K$} if $$\frac{\dim_K \Phi(U)}{\dim_K U} \geq \frac{\dim_K \Phi(E)}{\dim_K E}$$ for every $0\neq U\leq_K E$.
\end{defn}

\begin{lem} \label{lem:finite field extensions}
Retain the above notation and further assume that $M/K$ is a finite extension. If $E$ is $\Phi_K$-optimal over $K$, then $M\otimes_K E$ is $\Phi_M$-optimal over $M$.
\end{lem}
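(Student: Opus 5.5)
The plan is to reduce the statement over $M$ to a statement over $K$ about direct powers of $E$. Then I will prove that statement by induction on the number of summands, using a projection/kernel splitting.

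\emph{Step 1: restriction of scalars.} Let $n=[M:K]$ and fix a $K$-basis $m_1,\dots,m_n$ of $M$. This gives a $K$-linear identification $M\otimes_K E\cong E^{\oplus n}$, and similarly $M\otimes_K F\cong F^{\oplus n}$. Under it, $\id_M\otimes\phi_i$ becomes the diagonal map $\phi_i^{\oplus n}\colon E^{\oplus n}\to F^{\oplus n}$. Let $T\leq_M M\otimes_K E$. Then $T$ is in particular a $K$-subspace, and $\Phi_M(T)$, viewed as a $K$-space, is exactly
$$\Phi^{(n)}(T):=\sum_{i=0}^r \phi_i^{\oplus n}(T).$$
Since $\dim_K=n\dim_M$ for $M$-spaces, the two ratios agree:
$$\frac{\dim_M\Phi_M(T)}{\dim_M T}=\frac{\dim_K \Phi^{(n)}(T)}{\dim_K T}.$$
Also $\Phi^{(n)}(E^{\oplus n})=\Phi_K(E)^{\oplus n}$, so the ratio for the whole space is $\lambda:=\dim_K\Phi_K(E)/\dim_K E$. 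It therefore suffices to prove the following claim: for every $n\geq 1$ and every $K$-subspace $0\neq W\leq_K E^{\oplus n}$ (not only $M$-subspaces),
$$\dim_K\Phi^{(n)}(W)\geq \lambda\dim_K W.$$

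\emph{Step 2: induction on $n$.} The case $n=1$ is exactly the hypothesis that $E$ is $\Phi_K$-optimal; the inequality is trivial for $W=0$. For the inductive step, write $E^{\oplus n}=E^{\oplus(n-1)}\oplus E$ and let $\pi$ be the projection to the last summand, and likewise on $F^{\oplus n}$. Given $W$, set $W'=W\cap E^{\oplus (n-1)}$ and $W''=\pi(W)\leq_K E$, so that $\dim W=\dim W'+\dim W''$.

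Because the maps $\phi_i^{\oplus n}$ are diagonal, they commute with $\pi$ and preserve the first $n-1$ summands. Hence $\pi(\Phi^{(n)}(W))=\Phi_K(W'')$. Moreover, $\Phi^{(n)}(W)$ contains $\Phi^{(n-1)}(W')$, which lies in $\ker\pi$. Therefore
$$\dim\Phi^{(n)}(W)\geq \dim\Phi^{(n-1)}(W')+\dim\Phi_K(W'')\geq \lambda\dim W'+\lambda\dim W''=\lambda\dim W,$$
using the induction hypothesis for $W'$ and optimality of $E$ for $W''$, each applied only when the space is nonzero.

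\emph{Main obstacle.} The only real point is Step 1: one must check that passing from $M$-subspaces to arbitrary $K$-subspaces of $E^{\oplus n}$ loses nothing, and that the identification carries $\id_M\otimes\phi_i$ to the diagonal $\phi_i^{\oplus n}$. Both follow because $\id_M\otimes\phi_i$ acts coefficientwise in the basis $m_j$. Proving the claim for all $K$-subspaces is what makes the induction work, since $W'$ and $W''$ need not be $M$-subspaces. The hypothesis $\phi_0=\id$ is not needed for this lemma.
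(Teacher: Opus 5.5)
Your proposal is correct and follows essentially the same argument as the paper's proof. The paper also restricts scalars along a $K$-basis of $M$, which makes the maps diagonal on $E^{\oplus n}\to F^{\oplus n}$, and proves the inequality for all $K$-subspaces. It then inducts on $n=[M:K]$ via the projection onto the last summand, bounding $\dim\Phi^{(n)}(W)$ below by the kernel contribution (the induction hypothesis applied to $W'$) plus the image contribution $\Phi_K(W'')$ (optimality of $E$).
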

\begin{proof}
Suppose that $\dim_K \Phi_K(E) = c\cdot \dim_K E$. Notice that $\Phi_M(M\otimes_K E)=M\otimes_K \Phi_K(E)$, hence 
\begin{eqnarray*}
\dim_M \Phi_M(M\otimes_K E) & = & \dim_M M\otimes_K \Phi_K(E) \\
& = & \dim_K \Phi_K(E) \\ & = & c\cdot \dim_K E \\ & = & c \cdot \dim_M M\otimes_K E.    
\end{eqnarray*}
Fix a $K$-linear basis of $M$, say, $\{b_1,\dots,b_d\}$ with $b_1=1$.
As $K$-vector spaces, $M\otimes_K E\cong E^d$ and $M\otimes_K F\cong F^d$. Under this identification, for each $T\leq_M M\otimes_K E$, the space $\Phi_M(T)$ becomes $\sum_{i=0}^{r} \phi_i^d(T)$ where $\phi_i^d=(\phi_i,\dots,\phi_i)\colon E^d\rightarrow F^d$. Therefore, we must prove that for every $T\leq E^d$, 
\begin{equation} \label{eq:1}
\dim_K \sum_{i=0}^{r} \phi_i^d(T) \geq c \cdot \dim_K T.
\end{equation}
This will show that, pre-identification of $M\otimes_K E\cong E^d,M\otimes_K F\cong F^d$, we have
$$\dim_K \Phi_M(T) \geq c\cdot \dim_K T$$ and thus, dividing both sides by $d=[M\colon K]$,
$$\dim_M \Phi_M(T) \geq c\cdot \dim_M T.$$ This implies that $M\otimes_K E$ is $\Phi_M$-optimal over $M$.

Let us now prove \eqref{eq:1} by induction on $d$. The case $d=1$ is evident: it is equivalent to the assumption that $E$ is $\Phi_K$-optimal over $K$. Suppose that \eqref{eq:1} holds for $d-1$. Let $\pi_E\colon E^d\rightarrow E$ and $\pi_F\colon F^d\rightarrow F$ denote the projections onto the $d$-th component. Let $U := \pi_E(T)\leq_K E$ and $$T_0 := T\cap \ker(\pi_E)\leq_K E^{d-1}\oplus \{0\}.$$ Notice that $T/T_0\cong \pi_E(T)=U$, so 
\begin{equation} \label{eq:5}
\dim_K T = \dim_K T_0 + \dim_K U.
\end{equation}
Now let $Y := \sum_{i=0}^{r} \phi_i^d(T)$. Notice that
\begin{equation} \label{eq:2}
    \sum_{i=0}^{r}\phi_i^d(T_0) \subseteq Y \cap \ker(\pi_F)
\end{equation}
since every vector in $T_0$ takes the form $(e_1,\dots,e_{d-1},0)$ and $\phi_i^d(e_1,\dots,e_{d-1},0)=(\phi_i(e_1),\dots,\phi_i(e_{d-1}),\phi_i(0))$.
Next, we claim that
\begin{equation} \label{eq:3}
    \pi_F(Y) = \Phi_K(U).
\end{equation}
Indeed, every $y\in Y$ takes the form $\sum_{i=0}^{r} \phi_i^d(\vec{t}_i)$ for some $\vec{t}_i=(t_{i1},\dots,t_{id})\in T,\ 0\leq i\leq r$. Thus \begin{eqnarray*} 
\pi_F(y) & = & \sum_{i=0}^{r} \pi_F(\phi_i^d(\vec{t}_i)) \\ & = & \sum_{i=0}^{r} \pi_F(\phi_i(t_{i1}),\dots,\phi_i(t_{id})) \\ & = & \sum_{i=0}^{r} \phi_i(t_{id}) \in \Phi_K(\pi_E(T)) = \Phi_K(U).
\end{eqnarray*}
For the other inclusion, every element $\xi \in \Phi_K(U)$ takes the form $\xi = \sum_{i=0}^{r} \phi_i(t_{id})$ for some $\vec{t}_i=(t_{i1},\dots,t_{id})\in T,\ 0\leq i\leq r$. Now
$$\pi_F \underbrace{\left(\sum_{i=0}^{r} \phi_i^d(\vec{t}_i)\right)}_{\in Y} = \pi_F\left( \sum_{i=0}^{r} \phi_i(t_{i1}),\dots, \sum_{i=0}^{r} \phi_i(t_{id}) \right) = \xi.$$

Now, since $Y/\left(Y\cap \ker(\pi_F) \right)\cong \pi_F(Y)$, we have
\begin{eqnarray} \label{eq:4}
    \dim_K Y & = & \dim_K \left( Y\cap \ker(\pi_F) \right) + \dim_K \pi_F(Y) \nonumber \\ & \overset{\eqref{eq:2},\eqref{eq:3}}{\geq} & \dim_K \left( \sum_{i=0}^{r} \phi_i^d(T_0) \right) + \dim_K \Phi_K (U).
\end{eqnarray}
Recall that $T_0\leq E^{d-1} \oplus \{0\}$, and observe that $$\phi_i^{d}|_{E^{d-1} \oplus \{0\}} = \left(\phi_i^{d-1}|_{E^{d-1}}\right) \oplus 0$$
and therefore, by the induction hypothesis applied to $T_0$,
$$\dim_K \sum_{i=0}^{r} \phi_i^d (T_0) \geq c\cdot \dim_K T_0.$$ Back to \eqref{eq:4},
\begin{eqnarray*}
\dim_K Y & \geq & c\cdot \dim_K T_0 + \dim_K \Phi_K(U) \\ & \geq & c\cdot \dim_K T_0 + c\cdot \dim_K U \\ & \overset{\eqref{eq:5}}{=} & c\cdot \dim_K T,    
\end{eqnarray*}
where the middle inequality follows since $E$ is assumed to be $\Phi_K$-optimal over $K$. The lemma is proved.
\end{proof}

The next step is to lift optimality to arbitrary field extensions.

\begin{lem} \label{lem:arbitrary field extensions}
Retain the above notation and let $M/K$ be an arbitrary field extension. If $E$ is $\Phi_K$-optimal over $K$, then $M\otimes_K E$ is $\Phi_M$-optimal over $M$.
\end{lem}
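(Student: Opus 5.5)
We reduce to Lemma \ref{lem:finite field extensions} by a specialization argument. As in the proof of that lemma, $\Phi_M(M\otimes_K E)=M\otimes_K\Phi_K(E)$. Hence $\dim_M\Phi_M(M\otimes_K E)=c\cdot\dim_M(M\otimes_K E)$ with $c=\dim_K\Phi_K(E)/\dim_K E$. So we must show that every nonzero $T\leq_M M\otimes_K E$ satisfies $\dim_M\Phi_M(T)\geq c\cdot\dim_M T$.

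\textbf{Step 1: reduce to a finitely generated extension.} Fix $T$ and choose an $M$-basis $t_1,\dots,t_k$ of $T$. Write each $t_j$ in terms of a $K$-basis $e_1,\dots,e_n$ of $E$; this involves finitely many coefficients in $M$. Let $M_0\subseteq M$ be the subfield generated over $K$ by these coefficients, and put $T_0:=\operatorname{span}_{M_0}(t_j)$. Then $T=M\otimes_{M_0}T_0$ and $\Phi_M(T)=M\otimes_{M_0}\Phi_{M_0}(T_0)$, and dimensions are preserved under scalar extension. So it suffices to treat $M_0$, a finitely generated field extension of $K$.

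\textbf{Step 2: specialize to a finite extension.} Write $M_0$ as a finite extension of a purely transcendental field $K(x_1,\dots,x_m)$. By Lemma \ref{lem:finite field extensions}, applied to the finite extension $M_0/K(x_1,\dots,x_m)$, it is enough to prove the claim for $K(x_1,\dots,x_m)$. By induction on $m$, it is enough to treat a single indeterminate, $K\subseteq K(x)$. This is the main step.

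Let $T\leq K(x)\otimes_K E$ have dimension $k$, and let $\Phi_{K(x)}(T)$ have dimension $s$. Clearing denominators, pick basis vectors $t_j$ with coordinates in $K[x]$. Form the $n\times k$ matrix $P(x)$ of the $t_j$, and the matrix $Q(x)=[\phi_0P\,|\,\cdots\,|\,\phi_rP]$, whose column span is $\Phi_{K(x)}(T)$. Then $P(x)$ has a nonzero $k\times k$ minor, and every $(s+1)\times(s+1)$ minor of $Q(x)$ vanishes identically. Now choose a point where that $k$-minor stays nonzero. If $K$ is infinite, pick $a\in K$ with the chosen $k$-minor nonzero at $x=a$. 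Specializing gives $T_a\leq_K E$ with $\dim T_a=k$ and $\dim\Phi_K(T_a)\leq s$. Optimality of $E$ then gives $s\geq ck$. If $K$ is finite, instead take an irreducible polynomial $p$ of large degree not dividing the minor, and specialize $x\mapsto \bar x\in K[x]/(p)=:M'$, a finite extension. This gives $T'\leq M'\otimes_K E$ with $\dim_{M'}T'=k$ and $\dim\Phi_{M'}(T')\leq s$. Lemma \ref{lem:finite field extensions} applied to $M'/K$ gives $s\geq ck$.

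\textbf{Main obstacle.} The delicate point is that specialization lowers rank. We need the rank of the $T$-matrix preserved, which the nonvanishing minor guarantees, while the rank of the $\Phi$-matrix can only drop. The drop is harmless, since it goes in the favorable direction. The finite-field case is handled by passing to a finite extension and invoking the previous lemma, as described.
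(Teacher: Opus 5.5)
Your proof is correct. It shares the paper's core mechanism: specialize so that a nonzero maximal minor of the $T$-matrix survives, note that the rank of the $\Phi$-matrix can only drop, and invoke optimality over a finite extension. The decomposition, however, is different.

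\textbf{The paper's route.} The paper specializes in a single step. It takes the affine $K$-algebra $R\subseteq M$ generated by the entries of $P,Q,C_i$ and the inverses of the relevant minors, and reduces modulo an arbitrary maximal ideal $\mathfrak m$. Zariski's lemma (the Nullstellensatz) guarantees that $R/\mathfrak m$ is finite over $K$. As a result, Lemma~\ref{lem:finite field extensions} is only ever applied over the original base field $K$.

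\textbf{Your route.} You first reduce to a finitely generated $M_0$. You then split off the algebraic part $M_0/K(x_1,\dots,x_m)$ by applying Lemma~\ref{lem:finite field extensions} over the new base $K(x_1,\dots,x_m)$, with $E,F,\phi_i$ replaced by their scalar extensions. This uses $M_0\otimes_{K(\underline{x})}(K(\underline{x})\otimes_K E)\cong M_0\otimes_K E$, and the induction on $m$ uses transitivity of the optimality statement along towers. You leave both of these implicit, but they are routine. You only specialize in one variable, where it is elementary: evaluate at a non-root of $\Delta$ in $K$, or reduce modulo an irreducible $p\nmid\Delta$ when $K$ is finite. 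In the infinite case you do not even need Lemma~\ref{lem:finite field extensions}. Your bound on the specialized $\Phi$-rank, via identically vanishing $(s+1)$-minors of $[A_0P\mid\cdots\mid A_rP]$, replaces the paper's bookkeeping with the matrices $C_i$.

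\textbf{Trade-off.} Your approach avoids the Nullstellensatz, at the cost of a transcendence basis, an induction on transcendence degree, and base-change bookkeeping. The paper's approach is shorter and keeps everything over $K$.
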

\begin{proof}
Suppose that $\dim_K \Phi_K(E) = c\cdot \dim_K E$. As in the beginning of Lemma \ref{lem:finite field extensions}, observe that $\dim_M \Phi_M ( M\otimes_K E ) = c \cdot \dim_M M\otimes_K E$. Assume toward a contradiction that there is some $T\leq_M  M\otimes_K E$, say, $\dim_M T = a$, such that $\dim_M \Phi_M(T) < ca$. Denote $Y:=\Phi_M(T)$ and $b:=\dim_M Y$, so $b<ca$.

Our goal will be to construct some finite extension $K\subseteq M_0$ such that $M_0\otimes_K E$ is not $\Phi_{M_0}$-optimal, contradicting Lemma \ref{lem:finite field extensions}. Toward this end, we fix coordinates.

Fix $K$-linear bases $\mathcal{B}_E,\mathcal{B}_F$ for $E,F$, respectively, and view $E\cong K^n,F\cong K^m$ via coordinates in these bases. Subsequently, we obtain $M$-linear isomorphisms $T\leq M\otimes_K E \cong M^n,\ \  Y\leq M\otimes_K F\cong M^m$. We fix $M$-linear bases for $T,Y$ (viewed within $M^n,M^m$, respectively) and place them as column vectors of matrices
$$P\in \Mat_{n\times a}(M),\ \ Q\in \Mat_{m\times b}(M)$$
so the column spaces of $P,Q$ are $T,Y$, respectively, and $\rank(P)=a,\rank(Q)=~b$.
Moreover, for each $0\leq i\leq r$, let $A_i=[\phi_i]_{\mathcal{B}_F}^{\mathcal{B}_E} \in \Mat_{m\times n}(K)$. Thus $A_i T \subseteq Y$, so the column space of $A_i P$ -- which is $A_i T$ -- is contained in the column space of $Q$, which is $Y$. Therefore, there exist matrices $C_i\in \Mat_{b\times a}(M)$ such that 
\begin{equation} \label{eq:6}
A_i P = Q C_i\ \ \text{for all}\ \ 0\leq i\leq r.
\end{equation}
Recall that $\rank(P)=a,\rank(Q)=b$ and therefore there exists a non-zero $a\times a$ minor $\Delta$ in $P$ and a non-zero $b\times b$ minor $\Gamma$ in $Q$. (For simplicity, assume that $b>0$, which is indeed the case in our application, as we will apply this lemma for $\phi_0(v)=v$ so $T\subseteq \Phi_M(T)$. If one is interested in covering the case that $b=0$, take $\Gamma=1$.)

Let $R$ be the $K$-subalgebra of $M$ generated (as an algebra, not as a subfield) by all of the entries of the matrices $P,Q,C_i$, and by $\Gamma^{-1},\Delta^{-1}$. This is an integral domain, affine over $K$. Pick an arbitrary maximal ideal $\mathfrak{m}\triangleleft R$ and let $M_0 := R/\mathfrak{m}$. This is a finite field extension of $K$. We denote by $\bar{\cdot}$ the reduction modulo $\mathfrak{m}$. Notice that by \eqref{eq:6},
\begin{equation} \label{eq:7}
A_i \ \overline{P} = \overline{Q} \ \overline{C}_i
\end{equation}
for all $0 \leq i\leq r$, and $\rank(\overline{P})=a,\rank(\overline{Q})=b$ since these matrices admit invertible $a\times a$ and $b\times b$ minors, respectively.
Let $T_0 \leq M_0\otimes_K E\cong M_0^n$ be the column space of $\overline{P}$, and by the above argument, $\dim_{M_0} T_0 = \rank(\overline{P}) = a$. Considering the column space of the left hand side of \eqref{eq:7}, we see that $(\id_{M_0}\otimes \phi_i)(T_0)$ is contained in the column space of $\overline{Q}$, call it $Y_0$. Since this is true for all $0\leq i\leq r$, we have that
$$\dim_{M_0} \Phi_{M_0}(T_0) \leq \dim_{M_0} Y_0 \leq b < ca = c \cdot \dim_{M_0} T_0,$$
so $M_0\otimes_K E$ is not $\Phi_{M_0}$-optimal over $M_0$, contradicting Lemma \ref{lem:finite field extensions}.
\end{proof}

\begin{prop} \label{prop:going up}
Let $L$ be a field and let $K\subseteq L$ be a subfield. Let $A$ be an $L$-algebra and let $M$ be an $A$-module. If $M$ is amenable where $A$ is a $K$-algebra then $M$ is amenable where $A$ is an $L$-algebra.
\end{prop}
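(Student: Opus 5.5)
Fix $a_1,\dots,a_r\in A$ and $\varepsilon>0$. Amenability over $K$ gives a non-zero finite-dimensional $K$-subspace $V\leq_K M$ with $\dim_K(V+a_1V+\dots+a_rV)\leq(1+\varepsilon)\dim_K V$. We want an $L$-subspace that is almost invariant with respect to $a_1,\dots,a_r$. The natural candidate is the $L$-span $LV$, but we cannot control it directly, since $\dim_L LV$ may be much smaller than $\dim_K V$ while $L(V+\sum a_iV)$ is comparatively large. Instead, we will pass to an optimal subspace and then use Lemma \ref{lem:arbitrary field extensions} to transfer to $L$.

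\textbf{Step 1 (optimal subspace).} Set $E:=V$, $F:=V+\sum_i a_iV$ (finite-dimensional over $K$, with $E\subseteq F$), $\phi_0=\mathrm{inclusion}$ and $\phi_i=a_i|_E$. Note that $\Phi_K(U)=U+\sum a_iU$ for $U\leq_K E$. Among all non-zero $U\leq_K E$, choose one minimising $\dim_K\Phi_K(U)/\dim_K U$. This is possible because the ratios lie in a finite set of rationals with bounded denominators. Replacing $E$ by $U$ and $F$ by $\Phi_K(U)$, we may assume that $E$ is $\Phi_K$-optimal with ratio $c\leq 1+\varepsilon$. The bound holds because the minimum is at most the ratio for $V$ itself.

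\textbf{Step 2 (tensor up).} By Lemma \ref{lem:arbitrary field extensions} applied to the extension $L/K$, the space $L\otimes_K E$ is $\Phi_L$-optimal over $L$. Consider the multiplication maps $\mu_E\colon L\otimes_K E\to M$ and $\mu_F\colon L\otimes_K F\to M$, given by $\lambda\otimes v\mapsto\lambda v$. Both are $L$-linear and satisfy $\mu_F\circ(\id_L\otimes\phi_i)=a_i\circ\mu_E$, using that $a_i$ is $L$-linear because $A$ is an $L$-algebra. Let $T:=\ker\mu_E$ and $W:=LE=\mu_E(L\otimes E)\neq 0$.

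\textbf{Step 3 (main obstacle: controlling the kernel).} We have $W+\sum a_iW=\mu_F(\Phi_L(L\otimes E))$, and the dimension of this image is at most $\dim_L\Phi_L(L\otimes E)-\dim_L(\text{its intersection with }\ker\mu_F)$. The kernel satisfies $\Phi_L(T)\subseteq\ker\mu_F\cap\Phi_L(L\otimes E)$ by the intertwining relation. If $T\neq 0$, optimality gives $\dim_L\Phi_L(T)\geq c\dim_L T$. If $T=0$, this bound holds trivially. Hence
\[
\dim_L\Big(W+\sum a_iW\Big)\leq c\dim_L(L\otimes E)-c\dim_L T=c\dim_L W.
\]
So $W$ is $\varepsilon$-almost invariant over $L$. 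The finiteness needed in Lemma \ref{lem:arbitrary field extensions} concerns subspaces of $L\otimes_K E$, which are finite-dimensional over $L$, so the lemma applies. The delicate point is precisely this use of optimality on the kernel. It is the reason Step 1 is needed: without passing to an optimal subspace, the kernel could swallow most of $E$ while the image $W+\sum a_iW$ stays large.
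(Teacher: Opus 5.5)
Your proposal is correct and is essentially the paper's proof: you pass to a $\Phi_K$-optimal subspace, tensor up via Lemma~\ref{lem:arbitrary field extensions}, and apply optimality to the kernel $T=\ker\mu_E$ (the paper's $P$), whose image under $\Phi_L$ lies in $\ker\mu_F$ (the paper's $Q$), to obtain $\dim_L(LE+\sum_i a_iLE)\leq c\dim_L LE$. The only cosmetic difference is that you count dimensions through $\mu_F(\Phi_L(L\otimes_K E))$ and its intersection with $\ker\mu_F$, whereas the paper writes them directly as $m-q$ and $n-p$.
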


\begin{proof}
Let $A$ be an $L$-algebra and let $M$ be an $A$-module.
Fix a finite subset $a_1,\dots,a_r\in A$ and let $\varepsilon>0$ be given. Put $a_0=1$. By the amenability of $M$ over $A$ as a $K$-algebra, there exists a finite-dimensional subspace $V\leq_K M$ such that 
$$\frac{\dim_K(V+a_1V+\dots+a_rV)}{\dim_K V}\leq 1+\varepsilon.$$ We may further assume that $$\frac{\dim_K(V+a_1V+\dots+a_rV)}{\dim_K V}$$ is minimal possible among all non-zero subspaces of $V$, namely, $V$ is $\Phi_K$-optimal over $K$. Denote $n:=\dim_K V,\ W:=V+a_1V+\dots+a_rV,\ m:=\dim_K W$. Let $\phi_0,\dots,\phi_r\colon V\rightarrow W$ be given by $\phi_i(v)=a_iv,\ 0\leq i\leq r$; notice that, with notations as before, $\Phi_K(V)=W$. 

Consider the $L$-linear maps $\mu_V\colon L\otimes_K V \rightarrow M,\ \mu_W \colon L\otimes_K W\rightarrow M$ given by $\mu_V(\lambda\otimes v)=\lambda v,\ \mu_W(\lambda\otimes w)=\lambda w$ on pure tensors. Let $$P:=\ker(\mu_V) \leq_L L\otimes_K V,\ \ Q:=\ker(\mu_W)\leq_L L\otimes_K W$$ and denote $p:=\dim_L P,\ q := \dim_L Q$. We claim that
\begin{equation} \label{eq:8}
\Phi_L(P) \subseteq Q.
\end{equation}
Indeed, recall that $\Phi_L(P)=\sum_{i=0}^{r} (\id_L\otimes \phi_i)(P)$. Let $\xi\in P,\ \xi = \lambda_1\otimes v_1+\dots+\lambda_t \otimes v_t$ such that $\sum_{i=1}^{t} \lambda_i v_i = 0$. Then for each $0\leq i\leq r$,
\begin{eqnarray*}
(\id_L\otimes \phi_i)(\xi) & = & \lambda_1\otimes \phi_i(v_1)+\dots+\lambda_t \otimes \phi_i(v_t) \\ & = & \lambda_1\otimes a_i v_1+\dots+\lambda_t \otimes a_i v_t
\end{eqnarray*}
and $\mu_W(\lambda_1\otimes a_i v_1+\dots+\lambda_t \otimes a_i v_t)=a_i(\lambda_1 v_1+\dots+\lambda_t v_t)=0$, so $(\id_L\otimes \phi_i)(\xi)\in \ker(\mu_W)=Q$.

Since $V$ is $\Phi_K$-optimal over $K$, by Lemma \ref{lem:arbitrary field extensions}, $L\otimes_K V$ is $\Phi_L$-optimal over $L$; note that $$\frac{\dim_L L\otimes_K W}{\dim_L L\otimes_K V} = \frac{\dim_K W}{\dim_K V} = \frac{m}{n}.$$ Applying this to $P$ (we may assume $P\neq 0$ or else the following is evident), we see that
\begin{equation} \label{eq:9}
\dim_L \Phi_L(P) \geq \frac{m}{n} \dim_L P = \frac{m}{n}p.
\end{equation}
Moreover, using \eqref{eq:8}, we get
\begin{equation} \label{eq:10}
q = \dim_L Q \geq \dim_L \Phi_L(P) \geq \frac{m}{n} p.
\end{equation}
Now observe that 
\begin{eqnarray*}
&& LV \cong  (L\otimes_K V)/P \\
&& LV+a_1LV+\dots+a_rLV \cong (L\otimes_K W)/Q
\end{eqnarray*}
as $L$-vector spaces, so $$\dim_L LV = n - p,\ \ \ \dim_L \left(LV+a_1LV+\dots+a_rLV\right) = m - q.$$ By \eqref{eq:10},
$$m - q \leq  m - \frac{m}{n} p = \frac{m}{n}(n-p)$$
so
$$\frac{\dim_L \left(LV+a_1LV+\dots+a_rLV\right)}{\dim_L LV} = \frac{m - q}{n - p} \leq \frac{m}{n} \leq 1+ \varepsilon.$$
This proves that $LV$ is an $L$-vector subspace of $M$ which is $\varepsilon$-almost invariant with respect to $a_1,\dots,a_r$, so $M$ is amenable over $A$ where $A$ is viewed as an $L$-algebra.
\end{proof}

\section{Going down}

\begin{prop} \label{prop:going down}
Let $A$ be an $L$-algebra and let $K\subseteq L$ be a subfield. Let $M$ be an $A$-module. If $M$ is amenable over $A$ viewed as an $L$-algebra then it is also amenable over $A$ as a $K$-algebra.
\end{prop}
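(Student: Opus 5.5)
The plan is to take an $\varepsilon$-almost invariant $L$-subspace and "cut it down" to a finite-dimensional $K$-subspace by tensoring its coordinates with a Følner subspace of $L$ over $K$. If $[L\colon K]<\infty$ there is nothing to do: an $L$-subspace $V$ with $\dim_L(V+\sum a_iV)\le(1+\varepsilon)\dim_L V$ has $K$-dimensions equal to $[L\colon K]$ times its $L$-dimensions, so the ratio is unchanged. In general $V$ is infinite-dimensional over $K$, and the finite-dimensionality has to be restored by hand.

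Fix $a_1,\dots,a_r\in A$ and $\varepsilon>0$, and let $V\leq_L M$ be $\varepsilon$-almost invariant over $L$. Put $W:=V+a_1V+\dots+a_rV$, $n:=\dim_L V$ and $m:=\dim_L W\le(1+\varepsilon)n$. Choose an $L$-basis $w_1,\dots,w_m$ of $W$ such that $w_1,\dots,w_n$ is a basis of $V$. Write
\[
a_iw_j=\sum_{k=1}^m c_{ijk}w_k \qquad (1\le i\le r,\ 1\le j\le n),
\]
with $c_{ijk}\in L$. Let $S\subseteq L$ be the finite set of all the $c_{ijk}$. For a finite-dimensional $K$-subspace $0\neq F\leq_K L$, set
\[
U:=\sum_{j=1}^n Fw_j\leq_K M.
\]
Since the $w_j$ are $L$-linearly independent, this sum is direct and $\dim_K U=n\dim_K F$. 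Because $a_i$ commutes with scalars from $L$, we get $a_i(\lambda w_j)=\sum_k\lambda c_{ijk}w_k$. Hence
\[
U+\sum_i a_iU\subseteq \sum_{k=1}^m \Bigl(F+\sum_{s\in S}sF\Bigr)w_k,
\]
whose $K$-dimension is at most $m\dim_K(F+\sum_{s\in S} sF)$. So if $F$ satisfies $\dim_K(F+\sum_{s\in S}sF)\le(1+\delta)\dim_K F$, then
\[
\frac{\dim_K\bigl(U+\sum_i a_iU\bigr)}{\dim_K U}\le\frac{m}{n}(1+\delta)\le(1+\varepsilon)(1+\delta).
\]
Choosing $\delta$ small, and $\varepsilon$ small in advance, finishes the proof.

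The remaining step, which I expect to be the main point, is to produce such a Følner subspace $F$ of $L$ for multiplication by the finite set $S$. In other words, we need that the commutative affine $K$-domain $R:=K[S]\subseteq L$ is amenable as a module over itself, in the sense of Følner subspaces for the operators of multiplication by elements of $S$. My plan is a growth argument. Let
\[
F_N:=\operatorname{span}_K\{\text{monomials of degree}\le N\text{ in }S\},\qquad 1\in F_0.
\]
Then $F_N+\sum_{s\in S} sF_N\subseteq F_{N+1}$. Since $R$ is a finitely generated commutative algebra, $\dim_K F_N$ grows at most polynomially in $N$; indeed $\dim_K F_N\le\binom{N+|S|}{|S|}$ already suffices. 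If $\dim F_{N+1}/\dim F_N\ge 1+\delta$ held for all large $N$, then $\dim F_N$ would grow exponentially, which is a contradiction. Hence some $F=F_N$ works. (If $R$ happens to be finite-dimensional, $F=R$ works with $\delta=0$.) Note that we never need $F\neq0$ to be more than nonzero, and $1\in F_0$ guarantees this, so the argument is complete modulo these routine verifications.
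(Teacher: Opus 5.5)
Your proposal is correct and follows essentially the same route as the paper: you expand each $a_iw_j$ in a basis of $V+\sum a_iV$ extending a basis of $V$, collect the finitely many coefficients, and replace $V$ by $\sum_j Fw_j$, where $F$ is the span of monomials of bounded degree in those coefficients (the paper's $KX^m$ with $1\in X$); $F$ is F\o lner because finitely generated commutative subalgebras of $L$ have polynomial growth. Your pigeonhole argument from the crude bound $\binom{N+|S|}{|S|}$ is slightly more self-contained than the paper's assertion that $\dim_K KX^{m+1}/\dim_K KX^m\to 1$, and both arguments only need the existence of one good degree.
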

\begin{proof}
Let $a_1,\dots,a_r \in A$ and $\varepsilon>0$ be given. Pick an $n$-dimensional $L$-vector subspace $V=Lv_1+\dots+Lv_n\leq_L M$ such that $$\dim_L \left(V+a_1 V+\dots+a_r V\right) \leq (1+\varepsilon)n.$$
Write
$$V+a_1 V+\dots+a_r V = Lv_1+\dots+Lv_n+L a_{k_1} v_{i_1}+\dots+L a_{k_t} v_{i_t}$$
for some $k_1,\dots,k_t\in \{1,\dots,r\}$ and $i_1,\dots,i_t\in \{1,\dots,n\}$, and $t\leq \varepsilon n$, such that $v_1,\dots,v_n,a_{k_1}v_{i_1},\dots,a_{k_t}v_{i_t}$ form an $L$-linear basis for $V+a_1V+\dots+a_rV$.

For each $1\leq p\leq r$ and $1\leq i\leq n$, we can write
$$a_p v_i = \sum_{j=1}^{n} \alpha_{i,p;j} v_j + \sum_{l=1}^{t} \beta_{i,p;l} a_{k_l} v_{i_l}$$
for suitable $\alpha_{i,p;j},\beta_{i,p;l}\in L$. Let $X$ be the set of all these scalars, and throw in $1$ if it is not one of them. Observe that
\begin{equation} \label{eq:11}
a_p v_i \in KXv_1+\dots+KXv_n+KX a_{k_1} v_{i_1} + \dots + KX a_{k_t} v_{i_t}
\end{equation}
for all $1\leq p\leq r,\ 1\leq i\leq n$.

Since $X$ is a finite subset of a commutative $K$-algebra (namely, $L$), it follows that $$\frac{\dim_K KX^{m+1}}{\dim_K KX^m} \xrightarrow{m\rightarrow \infty} 1.$$ Fix $m\gg 1$ such that $$\dim_K KX^{m+1} \leq (1+\varepsilon) \underbrace{\dim_K KX^{m}}_{=:d} = (1+\varepsilon)d.$$
Let $U := KX^{m} v_1 + \dots + KX^{m} v_n$. Recall that $v_1,\dots,v_n$ are linearly independent over $L$ (and $K,X\subseteq L$), so $\dim_K U = n\cdot d$.
Now, by \eqref{eq:11},
\begin{eqnarray*}
U+a_1U+\dots+a_rU & \subseteq & KX^{m+1}v_1+\dots+KX^{m+1}v_n \\ & & + KX^{m+1} a_{k_1} v_{i_1} + \dots + KX^{m+1} a_{k_t} v_{i_t}
\end{eqnarray*}
and the dimension of the $K$-vector space on the right hand side is at most $$(n+t)\cdot \dim_K KX^{m+1} \leq (n+t)(1+\varepsilon)d.$$ Recall that $t\leq \varepsilon n$, so this dimension is further $\leq (1+\varepsilon)^2 dn$. Finally, recall that $\dim_K U= dn$, so
$$\dim_K\left(U+a_1U+\dots+a_rU\right) \leq (1+\varepsilon)^2 \dim_K U.$$
Since $(1+\varepsilon)^2\xrightarrow{\varepsilon\rightarrow 0} 1$, it follows that for every $\varepsilon>0$ we can find an $\varepsilon$-almost invariant $K$-subspace of $M$ with respect to $a_1,\dots,a_r$, proving that $M$ is amenable over $A$ when viewed as a $K$-algebra. 
\end{proof}

\section{Exhaustive amenability}

\begin{defn}
Let $K$ be a field and let $A$ be a $K$-algebra. Let $M$ be an $A$-module. We say that $M$ is \textbf{exhaustively amenable} if for every $a_1,\dots,a_r\in A$ and for every $\varepsilon>0$, every finite-dimensional subspace $W\leq_K M$ is contained in an $\varepsilon$-almost invariant subspace with respect to $a_1,\dots,a_r$.
\end{defn}

\begin{rem}
If, for every $a_1,\dots,a_r\in A$ and for every $\varepsilon>0$, $M$ admits $\varepsilon$-almost invariant subspaces with respect to $a_1,\dots,a_r$ of arbitrarily large dimensions, then $M$ is exhaustively amenable. Indeed, given any finite-dimensional $V\leq_K M$ of dimension, say, $d:=\dim_K V$, pick an $\varepsilon/2$-almost invariant subspace $W\leq_K M$ with respect to $a_1,\dots,a_r$, with $\dim_K W \geq \frac{2(r+1)d}{\varepsilon}$. 
It follows that $$(1+\varepsilon/2)\dim_K W + (r+1)d \leq (1+\varepsilon) \dim_K W$$
and
\begin{eqnarray*}
\dim_K\left(V+W+\sum_{i=1}^{r} a_i(V+W)\right) & \leq & \dim_K\left(V+\sum_{i=1}^{r} a_i V\right) + \dim_K\left(W+\sum_{i=1}^{r} a_i W\right)  \\ & \leq & (r+1)d + \dim_K(W+\sum_{i=1}^{r} a_i W) \\ & \leq & (r+1)d+(1+\varepsilon/2)\dim_K W \\ & \leq & (1+\varepsilon)\dim_K W \leq (1+\varepsilon) \dim_K (V+W)
\end{eqnarray*}
so $V+W$ is an $\varepsilon$-almost invariant subspace with respect to $a_1,\dots,a_r$, containing $V$.

Conversely, if $M$ is an infinite-dimensional, exhaustively amenable module, then it admits almost invariant subspaces of arbitrarily large dimensions.
\end{rem}

\begin{exmpl} \label{exmpl:free}
Let $A$ be a $K$-algebra with a non-zero finite-dimensional two-sided ideal $I$ such that $A/I$ is non-amenable as a module over itself. Then $A$ is amenable as a module over itself, but not exhaustively amenable.

Indeed, $A$ is amenable as it possesses an invariant subspace $I$; namely, $AI=I$, so $I$ is an $\varepsilon$-almost invariant subspace for every $\varepsilon>0$. However, assume that $A$ is exhaustively amenable and let $a_1,\dots,a_m\in A$ be lifts of arbitrary elements from $A/I$; denote $d:=\dim_K I$. Since $A$ is exhaustively amenable and infinite-dimensional, for every $\varepsilon>0$ and for every $N$, there exists $I\subseteq V\leq_K A$ such that $\dim_K V\geq N$ and $\dim_K\left(V+\sum_{i=1}^{m} a_i V \right)\leq (1+\varepsilon) \cdot \dim_K V$. In particular, we may take $\dim_K V \geq d+d/\varepsilon$. Let $\overline{V}$ denote the image of $V$ modulo $I$. Then $\dim_K \overline{V} = \dim_K V - d \geq d/\varepsilon$, and 
\begin{eqnarray*}
\dim_K\left(\overline{V}+\sum_{i=1}^{m} \bar{a}_i \overline{V}\right) & \leq & (1+\varepsilon) \cdot \dim_K V \\ & = & (1+\varepsilon) \cdot (\dim_K \overline{V} + d)  \\ & \leq & (1+\varepsilon)^2 \cdot \dim_K \overline{V}  
\end{eqnarray*}
and, since $(1+\varepsilon)^2\xrightarrow{\varepsilon\rightarrow 0} 1$, it follows that $A/I$ is amenable, a contradiction.
\end{exmpl}

Unlike amenability, exhaustive amenability is sensitive to the base field: large almost invariant subspaces can `shrink' along an extension of the base field.

\begin{exmpl}[{Exhaustive amenability is sensitive to base change}]
Consider the $K(t)$-algebra $$ A=K(t)\langle x,y,z \rangle / \langle xz,\ zx,\  yz,\ zy,\ z^2 \rangle. $$
Notice that $I:=\langle z \rangle = K(t)z$ is one-dimensional over $K(t)$, and $A/I$ is a non-commutative free $K(t)$-algebra; hence by Example \ref{exmpl:free}, $A$ is not exhaustively amenable as a $K(t)$-algebra.

Now let us view $A$ as a $K$-algebra. Let $\varepsilon>0$ and $a_1,\dots,a_r\in A$ be given. Let $b_1,\dots,b_s\in K(t)$ be rational functions such that the constant terms of $a_1,\dots,a_r$ (those not accompanying non-empty monomials in $x,y,z$) lie in $U:=K+Kt+Kb_1+\dots+Kb_s$. 
Since $U\leq_K K(t)$, a commutative $K$-algebra, there exists some $m_0$ such that for every $m\geq m_0$, we have $\dim_K U^{m+1}\leq (1+\varepsilon)\dim_K U^m$.
Let $V := U^m z \leq_K A$. Notice that $\dim_K V \geq m$. Moreover, for every $1\leq p\leq r$, we have $a_p V = a_p U^m z \subseteq U^{m+1} z$, so $$\dim_K (V+\sum_{i=1}^r a_i V)\leq \dim_K U^{m+1} z \leq (1+\varepsilon) \dim_K U^m = (1+\varepsilon) \dim_K V,$$
so we found almost invariant subspaces of arbitrarily large dimensions.
\end{exmpl}

\end{document}